\crefname{hypothesis}{Hypothesis}{Hypotheses}
\title{A Random Active Set Method for Strictly Convex Quadratic Problem with Simple Bounds\thanks{Submitted to the editors DATE.
\funding{This work was funded in part by NSFC grant 12001297 }}}
\author{Ran Gu\thanks{School of Statistics and Data Science, Nankai University, Tianjin 300071, P.R. China
  (\email{rgu@nankai.edu.cn}).}
\and Bing Gao\thanks{School of Mathematical Sciences and LPMC, Nankai University, Tianjin 300071, P.R. China,
  (\email{gaobing@nankai.edu.cn}).}
}
\renewcommand{\P}{\mathbb{P}}
\begin{document}
	\maketitle
	
	% REQUIRED
	\begin{abstract}
		Active set method aims to find the correct active set of the optimal solution and it is a powerful method for solving strictly convex quadratic problem with bound constraints. To guarantee the finite step convergence, the existing active set methods all need strict conditions or some additional strategies, which greatly affect the efficiency of the algorithm. In this paper, we propose a random active set method which introduces randomness in the update of active set. We prove that it can converge in finite iterations with probability one without any conditions on the problem or any additional strategies. Numerical results show that the algorithm obtains the correct active set within a few iterations, and compared with the existing methods, it has better robustness and efficiency.
	\end{abstract}
	
	% REQUIRED
	\begin{keywords}
		Convex Quadratic Problem, Bound Constraint, Active Set Method
	\end{keywords}
	
	% REQUIRED
	\begin{AMS}
		90C20, 90C25, 65K05
	\end{AMS}
	
	\section{Introduction}\label{sec:1}
	In this paper, we consider Strictly Convex Quadratic Problem (SCQP) with simple bound constraints. Without loss of generality, we let the simple bound constraints be nonnegative constraints. Then the problem can be written as
	\begin{equation}\label{qp}
	\begin{array}{cl}
	\min\limits_{x\in \mathbb{R}^n} & \frac{1}{2}x^TQx+g^Tx\\
	\text{s.t.} & x\geq 0,
	\end{array}
	\end{equation}
	where $g\in \mathbb R^n$ and $Q\in \mathbb R^{n\times n}$ is a symmetric positive definite matrix. Although, \cref{qp} is a simple convex optimization problem, it is widely used and has very important research significance. For example, it often appears in subproblems of some optimization algorithms for solving complex optimization problems, such as augmented Lagrangian function method for solving bound constraint problems \cite{conn1991globally,birgin2010global}, block coordinate descent method for solving nonnegative matrix factorization \cite{wang2012nonnegative}, sequential quadratic programming method for solving nonlinear problems \cite{gill2005snopt}, etc. In those problems,  \cref{qp} needs to be solved repeatedly,  so it is
	very important to solve \cref{qp} efficiently in order to greatly improve the efficiency of these algorithms.
	
	The common methods to solve problem \cref{qp} are interior point method, gradient projection method and active set method. Interior point algorithm minimizes a series of obstacle functions with parameters by Newton method to find the feasible searching direction, which is the dominant part of its computation cost, and then searches for the next interior point \cite{vanderbei1999loqo,wright1997primal,koh2007interior}. Different from interior point method, gradient projection method, whose search direction is the projection of the current gradient, allows the iterative points to move at the boundary of the feasible region \cite{conn2013lancelot,dai2005projected,more1991solution,rosen1960gradient}. Both of the above methods are feasible methods. Active set method was also a feasible method in the early days, but with people's attention to guessing the correct active set, a series of infeasible active set methods came into being. Next, we introduce it in detail.
	
	Since problem \cref{qp} is convex, solving the problem is equivalent to solving its Karush-Kuhn-Tucker (KKT) system, that is,
	\begin{equation}\label{kkt}
	\begin{array}{c}
	Qx+g-s=0,\\
	x^Ts=0,\\
	x\geq 0, s\geq 0,
	\end{array}
	\end{equation}
	where $s$ is the variable of Lagrange multipliers. We use $A\subset \{1,\ldots,n\}$ to represent the active set and $ I=\{1,\ldots,n\}\backslash A $ to denote the inactive set. That is to say,
	for the minimizer $x^*$ of (\ref{kkt}), we have $x^*_A=0$ and $x^*_I>0$. By the complementary slackness, we obtain $s^*_I=0$ and $s^*_A\geq 0$. Here, the notation $x_A$ represents the components of $x$ indexed by $A$. If $Q$ is a matrix and $A$ and $B$ are index sets, then $Q_{A,B}$ is denoted as the sub-matrix of $Q$ by $Q_{A,B} = (q_{ij}),\ i\in A,j\in B$. Using these notations, we rewrite the KKT system as
	\begin{equation}\label{kktas}
	\begin{array}{c}
	Q_{I,I}x^*_I+g_I=0,\\
	Q_{A,I}x^*_I+g_A-s_A=0,\\
	x^*_I\geq 0, s_A\geq 0.
	\end{array}
	\end{equation}
	
	It can be found that in \cref{kktas} the optimal solution can be directly solved by solving the first linear equations if the correct inactive set of the minimizer $ I $ is known in advance. Generally, the correct inactive set or active set can not be obtained at the beginning, thus active set methods iteratively adjust the estimated active set to find the correct one at the end. In those studies, Fletcher \cite{fletcher1971general} proposed an active set method to solve linear constrained convex quadratic programming. It contains an outer loop and an inner loop. In the inner loop, the algorithm iteratively reduces the cardinality of $I$ by one until the solved $x_I$ is positive. If the solved $s_A$ is nonnegative, then the KKT point is obtained. Otherwise, in the outer loop, it chooses an index $ a $ from $A$ with a corresponding negative $s_a$ and move it to $I$. In the algorithm, the iterative point remains feasible and the objective function decreases monotonically, which guarantees the finite step convergence of the algorithm. From the point of view of solving system, the KKT system \cref{kkt} itself is a linear complementarity problem, and the above active set method consists with the principal pivot method for solving such a linear complementarity problem. For more information on principal pivot methods, one can refer to \cite{judice1994block}.
	
	People are not satisfied with changing only one index of the active set at each iteration, because although the rank one update of Cholesky decomposition can speed up the calculation on solving linear equations, the overall computation is still large. People attempted to allow multiple simultaneous exchanges in each iteration. Different from the classical active set methods, for rapid adaptation of the active set estimate, a block principal pivoting algorithm \cite{judice1994block} is proposed. In \cite{kunisch2003infeasible}, Kunisch and Rendl proposed their KR algorithm, where in each iteration, KR computes $x_I$ and $s_A$ from \cref{kktas}, and exchanges all the indexes of negative elements in primal and dual variables. In their numerical experiments, the algorithm only needs several iterations to find the optimal solution. The finite step convergence was also proved under certain assumptions on the condition number of $Q$. Hinterm{\"u}ller, Ito, and Kunisch \cite{hintermuller2002primal} showed that this primal-dual active set approach can be rephrased as a semi-smooth Newton method.
	
	The assumptions of the condition number $Q$ mentioned above are not satisfied in many practical problems, so KR method often generates cycles, and the finite step convergence is invalid, which means that it can not be used safely. For example, in \cite{curtis2015globally} a 3-dimensional example showed the failure of KR method, where the iterative points construct a cycle; a large number of random tests also showed the failures of KR method in \cite{hungerlander2015feasible}. To conquer the potential cycle, Curtis, Han and Robinson \cite{curtis2015globally} introduced an uncertain set of indexes and a reduced subproblem in their algorithm, by which the finite step convergence is guaranteed without any assumptions. In \cite{hungerlander2015feasible}, Hungerl{\"a}nder and Rendl modified KR method by introducing a feasiblization approach to guarantee the monotonic descent of the objective function. After that, the relationship between semi-smooth Newton method and KR method was deeply explored, and an infeasible active set method combined with line search was proposed to guarantee the finite step convergence \cite{hungerlander2016infeasible}. In 2018, Huyer and Neumaier extended feasible active set approaches to indefinite quadratic programs \cite{huyer2018minq8}.
	
	The rapid adjustment of active set and the finite step convergence make it possible for active set method to beat other methods. It has many advantages over other methods, such as finding the exact numerical solution and being insensitive to initialization. Furthermore, strict complementarity is not required. In some cases, active set methods can outperform the primal-dual interior point methods in computing time, because active set methods solve the linear equations only on primal variables, while the interior point methods solve the entire system. However, for the above-mentioned active set methods, the rapid adjustment of active set and the finite step convergence are contradictory, because in order to ensure finite step convergence, they all utilize additional strategies, which increases the computational cost.
	
	In this paper, our goal is to make active set method converge within a finite number of steps on the basis of effectively adjusting the active set, without adding redundant calculations. The method we are going to propose is based on a simple idea. Intuitively, since the essence of making finite step convergence untenable is the "cycle", a natural way to break the cycle is to introduce randomness. In detail, we can randomly select the exchanging index set so that the randomness will help to avoid the cycle and provide more possibilities for the success of the algorithm. This idea constitutes the basic framework of our new method, which is called Random Active Set method(RAS). This is the first work to combine randomness and active set method. Although it seems to be a small change in the original framework, it is very effective. We will show the efficiency of RAS by some numerical experiments. It is worth mentioning that our new algorithm performs very well in a large number of randomly generated examples. Besides, we also prove the finite step convergence of RAS in the sense of probability.
	
	The rest of this paper is organized as follows: In \cref{sec:2}, we propose a generic RAS framework and show the convergence result of the new method. In \cref{sec:3}, we give the details of some probability settings in RAS. In \cref{sec:4}, a large amount of numerical instances are tested to illustrate the performance of our proposed algorithm. Finally, in \cref{sec:5}, we summarize the main contributions of this paper and put a perspective on some outstanding challenges.

	% The outline is not required, but we show an example here.
	%The paper is organized as follows. Our main results are in
	%\cref{sec:main}, our new algorithm is in \cref{sec:alg}, experimental
	%results are in \cref{sec:experiments}, and the conclusions follow in
	%\cref{sec:conclusions}.
	
	\section{Random active set method}\label{sec:2}
	
	As introduced in \cref{sec:1}, all the indexes $ \{1,\ldots,n\} $ are divided into inactive set $I$ and active set $A$. Then, according to the sign of the elements of  $x_I$ and $s_A$, which are obtained by solving the first two equations in \cref{kktas}, $I$ can be divided into $Ip=\{i\in I|x_i>0\}$ and $Im=\{i\in I|x_i\leq 0\}$, and $A$ can be divided into $Ap=\{j\in A|s_j\geq 0\}$ and $Am=\{j\in A|s_j<0\}$. The above-mentioned active set methods all adopt deterministic rules, to exchange part of $Im$ and $Am$ in each iteration, and obtain new $I$ and $A$ for iteration until $Im$ and $Am$ become empty sets, which also implies that the KKT system \cref{kktas} is solved.
	
	The basic idea of RAS is to use randomness to select the indexes to be exchanged, rather than deterministic rules. To describe the algorithm, we need a symbol to show that some elements are randomly selected from a set. Here we define
	$$
	B=rand(A,p)
	$$
	to be a subset of $A$, where $A$ is an index set, $p=[p_1,p_2,\ldots,p_{|A|}]$ is a vector with the same size as $A$, and the $i$-th element of $A$ exists in $B$ with probability $p_i$. Here $p$ can also be a scalar for convenience, if all $p_i$ takes the same value.
	
	\begin{figure}
		\centering
		% Requires \usepackage{graphicx}
		\includegraphics[scale=0.5]{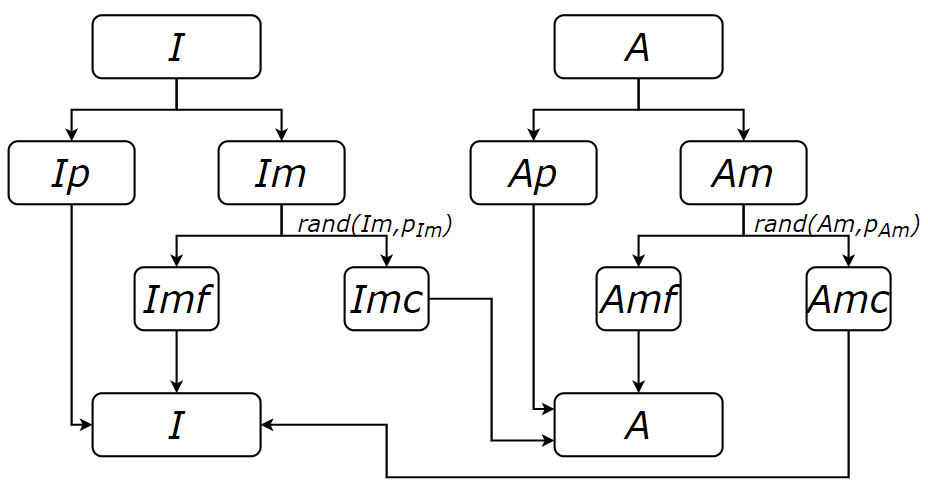}\\
		\caption{Flow chart of generic random active set method}\label{fig:gras}
	\end{figure}
	
	\cref{fig:gras} shows the flow chart of generic RAS. In RAS, each index in $Im$ and $Am$ has a certain probability of leaving the current set. In $Im$, the set of indexes that is removed is marked as $Imc$, and the set of remaining indexes is marked as $Imf$. Correspondingly in $Am$, the set of indexes to leave is marked as $Amc$, and the set of remaining indexes is marked as $Amf$. Then the new $I$ and $A$ are obtained by $I = Ip\cup Imf\cup Amc$ and $A = Ap\cup Amf\cup Imc$. The setting rule of each probability are not specified here, so we call this method generic RAS, and its algorithm framework is shown in \cref{gras}. Note that we have given the upper and lower bounds of each probability, that is, each probability is selected from $[\sigma, 1-\sigma]$. This makes each index in $Im$ and $Am$, whether exchanged or reserved, have a probability greater than zero, which plays an important role in proving the finite step convergence in the sense of probability.
	
	\begin{algorithm}
		\caption{A generic random active set method}\label{gras}
		%	\begin{itemize}
		\begin{enumerate}
			\item[Step 0:] Give a constant $\sigma$ such that $0<\sigma\leq 0.5$, the initial guess on active set $A$ and $I=\{1,\ldots,n\}\setminus A$, and a tolerance on dual nonnegativity violation $tol\geq 0$.
			\item[Step 1:] Solve the first two lines in \cref{kktas} and classify the infeasible indexes as follows: \\
			$x_I = -Q_{I,I}^{-1}g_I$,\\
			$s_A = Q_{A,I}x_I + g_A$,\\
			$Im = \{i\in I|x_i \leq 0\}$, $Am = \{j\in A| s_j < -tol\}$, $Ip=I\setminus Im$, $Ap=A\setminus Am$.
			\item[Step 2:] If $Im\cup Am=\emptyset$, stop. \\
			Otherwise, take $p_{Im}\in \mathbb R^{|Im|}$ and
			$p_{Am}\in \mathbb R^{|Am|}$ with each element belonging to $[\sigma, 1-\sigma]$. Then let
			$Imc=rand(Im,p_{Im})$, $Imf=Im\setminus Imc$, $Amc=rand(Am,p_{Am})$, $Amf=Am\setminus Amc$.
			\item[Step 3:]
			Update $I = Ip\cup Imf\cup Amc$, $A = Ap\cup Amf\cup Imc$. Go to Step 1.
		\end{enumerate}
		%	\end{itemize}
	\end{algorithm}
	
	Based on the above facts, we notice that RAS can produce the same index set change as Fletcher's active set method \cite{fletcher1971general} with a certain probability. Next, we analyze the finite step convergence of RAS in the sense of probability by using the finite step convergence property of Fletcher's active set method. In detail, \cref{lem:finite1} indicates that RAS is consistent with Fletcher's active set method under a certain probability. \cref{lem:finite2} describes a characteristic of RAS, that is, it can terminate within a certain number of iterations  from any iteration with a positive probability. Then the final conclusion is given in \cref{thm} that RAS can terminate in finite iterations with probability 1.
	
	\begin{lemma}\label{lem:finite1}
		For any $\sigma\in (0,0.5]$, there exists $\delta_1>0$, satisfying the following statement:\\
		Take any $x$ as the iterative point of Fletcher's active set method \cite{fletcher1971general} and the corresponding inactive set is $I$, the complement of $I$ is $A$.
		Then the probability that the new $I$ and $A$ generated by \cref{gras} are the same as those generated by Fletcher's active set method is at least $\delta_1 $.
	\end{lemma}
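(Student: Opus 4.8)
The plan is to show that Fletcher's deterministic update of the active set is always achievable as one particular outcome of the random selection in \cref{gras}, and then to bound the probability of that outcome from below by a constant $\delta_1$ depending only on $\sigma$ and $n$. First I would recall precisely what Fletcher's active set method does at a generic iterate $x$ with inactive set $I$ and active set $A$: it performs a specific deterministic change, moving certain indices of $Im$ into $A$ and certain indices of $Am$ into $I$ (in the single-exchange setting, it moves exactly one index, but the argument is the same for any deterministic subset). The key observation is that every such deterministic move corresponds to choosing particular subsets $Imc\subseteq Im$ and $Amc\subseteq Am$ in Step 2 of \cref{gras}, since the new sets are determined by $I = Ip\cup Imf\cup Amc$ and $A = Ap\cup Amf\cup Imc$. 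Thus, whatever target $\widetilde I$ and $\widetilde A$ Fletcher's method produces, there is a unique pair of random subsets $(Imc, Amc)$ that reproduces it.

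Next I would compute the probability that $rand(Im,p_{Im})$ and $rand(Am,p_{Am})$ return precisely these target subsets. For each index $i\in Im$, it is either included in $Imc$ (with probability $p_i$) or excluded (with probability $1-p_i$); by the constraint in Step 2, both $p_i$ and $1-p_i$ lie in $[\sigma, 1-\sigma]$, so in either case the factor contributed is at least $\sigma$. The same holds for each index in $Am$. Since the inclusion decisions are independent across indices, the probability of hitting the exact target pair is a product of at most $|Im|+|Am|\leq n$ factors, each at least $\sigma$. Therefore this probability is at least $\sigma^{|Im|+|Am|}\geq \sigma^n$, and I would simply set $\delta_1 = \sigma^n$, which is strictly positive and independent of the particular iterate $x$.

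The main subtlety to handle carefully is verifying that Fletcher's update genuinely lies in the range of admissible outcomes of \cref{gras}. Concretely, Fletcher's method moves indices only between $Im$ and $A$, and between $Am$ and $I$, leaving $Ip$ and $Ap$ fixed; I must confirm that the reindexing in Step 3 of \cref{gras} (namely $I = Ip\cup Imf\cup Amc$ and $A = Ap\cup Amf\cup Imc$) has exactly this structure, so that Fletcher's target is expressible as some $(Imc,Amc)$ with $Imc\subseteq Im$ and $Amc\subseteq Am$. Once this structural compatibility is confirmed, the probability bound follows immediately from independence and the uniform lower bound $\sigma$ on each per-index factor. I expect no further obstacle: the only real work is the bookkeeping that matches Fletcher's exchange rule to a valid choice of random subsets, after which the $\sigma^n$ lower bound is routine.
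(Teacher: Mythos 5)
Your proposal is correct and follows essentially the same route as the paper: both arguments observe that Fletcher's deterministic exchange is one realization of the random choice in Step 2, bound each per-index inclusion/exclusion factor below by $\sigma$, and conclude with $\delta_1=\sigma^n$. The paper merely spells out the two cases (one index leaving $Im$ in the inner loop, one leaving $Am$ in the outer loop) where you argue uniformly over an arbitrary deterministic target pair $(Imc,Amc)$; the content is the same.
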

	\begin{proof}
		First of all, let's review the Fletcher's active set method introduced in \cref{sec:1}, in which the update rule consists of two-level loops. In the inner loop, the number of elements of $I$ is reduced by one in each iteration, while in the outer loop, the number of elements of $I$ is increased by one. We discuss these two cases as follows.
		
		When $Im$ is not empty, the iterative point $x_I$ will move straightly to $x_I^*=-Q_{II}^{-1}g_I$ until reaching the feasible boundary. That is to say, there is one index changing from $Im$ to $A$ in Fletcher's active set method. In RAS, the probability of moving $i$ from $Im$ to $A$ is
		$$
		\left(\prod\limits_{j\in Im, j\neq i}(1-p_j)\right)p_i\left(\prod\limits_{j\in Am}(1-p_j)\right)\geq \sigma^n
		$$
		
		When $Im$ is empty and $Am$ is not empty, there is one index changing from $Am$ to $I$ in Fletcher's active set method. In RAS, the probability of moving $i$ from $Am$ to $I$ is
		$$
		\left(\prod\limits_{j\in Am, j\neq i}(1-p_j)\right)p_i\left(\prod\limits_{j\in Im}(1-p_j)\right)\geq \sigma^n
		$$
		
		When $Im$ and $Am$ are empty, both Fletcher's active set method and RAS stop.
		
		To sum up, the lemma holds for $\delta_1 = \sigma^n$.
	\end{proof}
	
	\begin{lemma}\label{lem:finite2}
		For any $\sigma\in (0,0.5]$, there exist constants $\delta_2>0$ and $\bar K$, such that the probability of \cref{gras} terminating in the subsequent $\bar K$ iterations from any iterative point is at least $\delta_2$.
	\end{lemma}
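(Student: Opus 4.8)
The plan is to show that, started from any current active set, RAS has a uniformly positive chance of exactly reproducing an entire run of Fletcher's active set method, and that such a run reaches the KKT active set within a bounded number of steps. Combining the per-step matching probability of \cref{lem:finite1} with a uniform bound on the length of Fletcher's runs then yields the desired constants $\delta_2$ and $\bar K$.

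First I would fix a deterministic realization of Fletcher's active set method and observe that its number of iterations admits a bound uniform over all possible starting configurations. The active set $A$, equivalently the inactive set $I = \{1,\ldots,n\}\setminus A$, ranges over the finite family of subsets of $\{1,\ldots,n\}$, of which there are at most $2^n$. Fletcher's method terminates after finitely many single-index changes from every starting active set, since its iterates remain feasible while the objective decreases monotonically, so that no active set can recur. Writing $K(A)$ for the number of changes needed to reach the KKT active set from $A$ and setting $\bar K = \max_A K(A)$, a maximum over finitely many finite quantities, produces a finite bound valid from every iterative point RAS can occupy.

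Next I would propagate \cref{lem:finite1} along a Fletcher trajectory. From an arbitrary RAS iterate with active set $A = A^{(0)}$, let $A^{(0)}, A^{(1)}, \ldots, A^{(m)}$ with $m = K(A) \le \bar K$ be the active sets generated by Fletcher's method, so that $A^{(m)}$ satisfies $Im\cup Am = \emptyset$. Conditioned on RAS having reproduced $A^{(0)}, \ldots, A^{(k)}$, its current active set equals $A^{(k)}$, which is itself an iterative point of Fletcher's method; hence \cref{lem:finite1} applies and the conditional probability that the next RAS update equals $A^{(k+1)}$ is at least $\delta_1 = \sigma^n$. Multiplying these conditional lower bounds over the at most $\bar K$ steps shows that RAS shadows the whole trajectory with probability at least $\delta_1^{\bar K} = \sigma^{n\bar K}$; on that event RAS reaches $A^{(m)}$, finds $Im\cup Am=\emptyset$, and halts at Step 2 within $\bar K$ iterations. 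The claim therefore holds with $\bar K$ as above and $\delta_2 = \sigma^{n\bar K}$.

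The main obstacle is the uniformity in the first step: securing a single finite bound $\bar K$ that is independent of where RAS currently sits, rather than a per-iterate bound. This rests on the finiteness of the family of active sets together with the non-recurrence of active sets along Fletcher's method, the latter coming from the monotone decrease of the objective on its feasible iterates. Once $\bar K$ is fixed, the remaining probabilistic estimate is only a product of the per-step bounds supplied by \cref{lem:finite1}; the single point requiring care is that the conditioning keeps RAS exactly on the chosen Fletcher trajectory, so that \cref{lem:finite1} stays applicable at every step.
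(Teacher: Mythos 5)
Your proposal is correct and follows essentially the same route as the paper: take the maximum Fletcher trajectory length $\bar K$ over the finitely many possible active sets, then apply \cref{lem:finite1} step by step along the Fletcher trajectory to lower-bound the probability of shadowing it by $\delta_1^{\bar K}$, yielding $\delta_2=\delta_1^{\bar K}=\sigma^{n\bar K}$. Your write-up is in fact slightly more explicit than the paper's about the conditioning argument that keeps RAS on the Fletcher trajectory, but the underlying idea and the constants are identical.
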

	\begin{proof}
		For any given iteration of \cref{gras}, suppose that its inactive set is $I$, and a corresponding feasible point $x$ can be constructed. We implement Fletcher's active set method for the initial point $x$ until the algorithm terminates. Then in the $k$-th iteration, the algorithm generates an inactive set $I^{(k)}$. We regard the series of $I^{(k)}$ as a sequence, and define its length as $K_I$, which is finite because of the finite step convergence property of Fletcher's active set method. Since the number of combinations of $I$ is finite, we record the maximum value of $K_I$ as $\bar K$, that is,
		$$
		\bar K= \max\limits_{I\subseteq \{1,\ldots,n\}} K_I.
		$$
		Combined with \cref{lem:finite1}, we have
		$$
		\begin{array}{l}
		\quad\P(\text{RAS terminates in the subsequent $\bar K$ iterations})\\
		\geq \P(\text{RAS generates the same index set sequence as Fletcher's active set method})\\
		\geq  \delta_1^{\bar K}
		\end{array}
		$$
		Then the lemma holds for $\delta_2=\delta_1^{\bar K}$.
	\end{proof}
	
	\begin{theorem}\label{thm}
		\cref{gras} can stop in finite iterations with probability 1.
	\end{theorem}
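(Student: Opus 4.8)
The plan is to exploit \cref{lem:finite2} repeatedly by cutting the (potentially infinite) run of \cref{gras} into consecutive blocks of length $\bar K$, and to show that with probability one the algorithm must terminate during one of these blocks. First I would set up the block structure. Let $T$ denote the (random) iteration at which \cref{gras} stops, with $T=\infty$ if it never stops; the goal is $\P(T<\infty)=1$, equivalently $\P(T=\infty)=0$. For $m\ge 1$ let $E_m=\{T>m\bar K\}$ be the event that the algorithm has not yet terminated after $m\bar K$ iterations. These events are nested, $E_1\supseteq E_2\supseteq\cdots$, and the non-termination event is $\{T=\infty\}=\bigcap_{m\ge 1}E_m$.

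Next I would bound $\P(E_m\mid E_{m-1})$. The key structural feature of \cref{gras} is that at each iteration the exchange sets $Imc$ and $Amc$ are produced by fresh, independent draws through the $rand$ operator, so the sequence of inactive sets is a time-homogeneous Markov chain on the finite collection of subsets of $\{1,\ldots,n\}$. Conditioning on the configuration $I$ at the start of the $m$-th block, \cref{lem:finite2} guarantees that the probability of terminating within that block is at least $\delta_2$, and crucially this bound is \emph{uniform} over the starting configuration. Hence the conditional probability of surviving the entire block satisfies $\P(E_m\mid E_{m-1})\le 1-\delta_2$ for every $m$.

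Finally, chaining these estimates gives $\P(E_m)\le (1-\delta_2)^m$ for all $m$. Since $\delta_2>0$ is a fixed positive constant (indeed $\delta_2=\delta_1^{\bar K}=\sigma^{n\bar K}$ with $0<\sigma\le 0.5$), we have $1-\delta_2\in[0,1)$ and therefore $(1-\delta_2)^m\to 0$ as $m\to\infty$. By continuity of measure from above on the nested sequence $E_1\supseteq E_2\supseteq\cdots$,
\[
\P(T=\infty)=\P\!\left(\bigcap_{m\ge 1}E_m\right)=\lim_{m\to\infty}\P(E_m)=0,
\]
which is precisely the assertion that \cref{gras} stops in finite iterations with probability 1.

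I expect the main obstacle to be making the conditioning step rigorous. \cref{lem:finite2} is phrased as a bound holding ``from any iterative point,'' and to turn the per-block estimates into the product $(1-\delta_2)^m$ one must invoke the Markov property so that, conditioned on the configuration at the beginning of a block, the event of surviving that block is controlled by the same uniform constant $\delta_2$ regardless of how the algorithm reached that configuration. The independence of the random draws across iterations is exactly what supplies this property; once it is spelled out, the remaining geometric estimate and the passage to the limit are routine.
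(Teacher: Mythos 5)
Your proof is correct and rests on exactly the same ingredients as the paper's: the decomposition of the run into blocks of $\bar K$ iterations and the uniform bound $\P(\text{not terminated after } N+\bar K)\le(1-\delta_2)\,\P(\text{not terminated after } N)$ supplied by \cref{lem:finite2}. The only difference is packaging --- you iterate this inequality directly to obtain $\P(E_m)\le(1-\delta_2)^m\to 0$ and conclude by continuity of measure, whereas the paper wraps the identical inequality in a proof by contradiction against the infimum $\alpha$ of the tail probabilities; your direct version is the cleaner presentation of the same argument.
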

	\begin{proof}
		We give proof by contradiction. Assume that the probability of infinite step convergence of \cref{gras} is greater than 0, that is,
		$$
		\inf\limits_N \{\P(\text{the number of iterations of \cref{gras}} \geq N)\}=\alpha>0.
		$$
		Then, there exists $N_0$ such that
		$$
		\P(\text{the number of iterations of \cref{gras}} \geq N_0)< \frac{\alpha}{1-\delta_2},
		$$
		where $\delta_2$ is the constant obtained from \cref{lem:finite2}.
		
		Combined with \cref{lem:finite2}, we have
		$$
		\begin{aligned}
		&\P(\text{the number of iterations of \cref{gras}} \geq N_0+\bar K)\\
		\leq &\P(\text{the number of iterations of \cref{gras}} \geq N_0) (1-\delta_2)\\
		< &\alpha(1-\delta_2)^{-1}(1-\delta_2)\\
		=&\alpha
		\end{aligned}
		$$
		This contradicts with the fact that $\alpha$ is the infimum.
	\end{proof}

	\section{Probability setting}
	\label{sec:3}
	In this section, we will give the specific probability setting based on generic RAS.  In Step 2 of generic RAS, the probability specified by each index can be arbitrarily selected within a given range. If the probability update rule is designed for each index separately, the number of hyperparameters is too large, which is not good for algorithm design. On the contrary, if the probabilities of all indexes are set to a unified fixed number, the algorithm will lose its efficiency. Therefore, we hope to find some characteristics, divide the whole index set into multiple categories, and formulate unified update rule for each category.
	
	Intuitively, the two index sets $I$ and $A$ are asymmetric. The set $I$ participates in solving the linear equations in KKT system \cref{kktas}, but $A$ does not. Therefore, the probabilities on $Im$  and $Am$  should be different. Here we use a two-dimensional example to illustrate this finding. See the following proposition.
	
	\begin{proposition}\label{prop:1}
		Let $Q\in \mathbb R^{2\times 2}$ be a random positive definite matrix with $Q_{12}$ mean-zero symmetrically distributed. Suppose that $g\in \mathbb R^2$ is a random vector independent with $Q$, whose random variables $g_1$, $g_2$  satisfying that the value of the joint probability distribution function on any zero-centered circle is fixed, and $\P(g_1=g_2=0)<1$. Suppose for case 1, $I=Im=\{1,2\},A=\emptyset$, in the next iteration, $I^+=\emptyset, A^+=\{1,2\}$; For case 2, $I=\emptyset,A=Am=\{1,2\}$, in the next iteration, $I^+=\{1,2\}, A^+=\emptyset$. Denote the number of infeasible indexes in case 1 and case 2 in the next iteration as $N1$ and $N2$ respectively, then $\mathbb E(N1)\leq \mathbb E(N2)$ and the equality holds if and only if $\P(Q_{12}=0) = 1$.
	\end{proposition}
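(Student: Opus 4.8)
The plan is to use the radial symmetry of $g$ to turn both quantities into ratios of arc lengths on the unit circle, and then to average over the symmetric law of $Q_{12}$. First I would record what the ``next iteration'' actually computes. In both cases the only two vectors that matter are the dual estimate $s=g$ (which does not involve $Q$, since the inactive set is empty when it is evaluated) and the primal estimate $x=-Q^{-1}g$; with $tol=0$ the infeasible indices are $\{i:x_i\le0\}$ and $\{j:s_j<0\}$. Writing $E_i=\{x_i\le0\}$ and $F_i=\{g_i<0\}$, case 1 conditions on $E_1\cap E_2$ and gives $N1=\mathbf 1_{F_1}+\mathbf 1_{F_2}$, while case 2 conditions on $F_1\cap F_2$ and gives $N2=\mathbf 1_{E_1}+\mathbf 1_{E_2}$. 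I would fix a realization of $Q$, condition the random direction of $g$ on the relevant event, compute $\mathbb E(N_i\mid Q)$, and obtain the statement after taking $\mathbb E_Q[\,\cdot\,]$.

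Second, I would compute the arc lengths. Since the density of $g$ is constant on zero-centered circles, its direction $\phi$ is uniform and every event above is a union of arcs depending only on $\phi$. Denoting the entries of $Q^{-1}$ by $a=(Q^{-1})_{11}$, $b=(Q^{-1})_{12}$, $c=(Q^{-1})_{22}$ (so $a,c>0$, $ac>b^2$), the half-planes $E_1,E_2$ are semicircles whose inward normals $(a,b)$ and $(b,c)$ have angles $\alpha\in(-\tfrac\pi2,\tfrac\pi2)$ and $\beta\in(0,\pi)$, while $F_1,F_2$ are the coordinate half-planes. Checking that positive definiteness forces $\beta>\alpha$ and $\beta-\alpha<\pi$ (so $E_1\cap E_2$ is a single arc of length $\pi-(\beta-\alpha)$) and that $F_1\cap F_2$ is the third-quadrant arc of length $\tfrac\pi2$, a direct intersection-of-arcs count gives
\[
\mathbb E(N1\mid Q)=\frac{(\alpha)_++(\tfrac\pi2-\beta)_+}{\pi-(\beta-\alpha)},\qquad \mathbb E(N2\mid Q)=\frac{(-\alpha)_++(\beta-\tfrac\pi2)_+}{\pi/2},
\]
where $(t)_+=\max\{t,0\}$. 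The decisive structural fact is that $\sign\alpha=\sign(\tfrac\pi2-\beta)=\sign b=-\sign Q_{12}$, so whenever $Q_{12}\neq0$ exactly one of the two numerators vanishes.

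Third, I would symmetrize. Because $Q_{12}$ is mean-zero symmetric and flipping its sign fixes $\det Q,a,c$ while sending $b\mapsto-b$, i.e. $(\alpha,\beta)\mapsto(-\alpha,\pi-\beta)$, I may replace $\mathbb E(N_i)$ by $\tfrac12\mathbb E_Q\big[\mathbb E(N_i\mid Q)+\mathbb E(N_i\mid Q')\big]$, where $Q'$ is the reflected matrix. Setting $\gamma=\beta-\alpha\in(0,\pi)$, $P=(\alpha)_++(\tfrac\pi2-\beta)_+$ and $M=(-\alpha)_++(\beta-\tfrac\pi2)_+$, the two case-1 means become $P/(\pi-\gamma)$ and $M/\gamma$, and the two case-2 means become $M/(\tfrac\pi2)$ and $P/(\tfrac\pi2)$. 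Thus $\mathbb E(N1)\le\mathbb E(N2)$ amounts to $\tfrac{P}{\pi-\gamma}+\tfrac{M}{\gamma}\le\tfrac{2(P+M)}{\pi}$, which, using $P=M+\tfrac\pi2-\gamma$ and clearing the positive denominators, collapses to
\[
(2\gamma-\pi)^2\,(2M-\gamma)\le0 .
\]
By the sign dichotomy one has either $M=0$ (when $Q_{12}<0$, forcing $\gamma<\tfrac\pi2$) or $M=\gamma-\tfrac\pi2$ (when $Q_{12}>0$, forcing $\gamma>\tfrac\pi2$); in both cases $2M-\gamma<0$, so the inequality is strict, while $Q_{12}=0$ gives $\gamma=\tfrac\pi2$ and equality. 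Averaging over $Q$ yields $\mathbb E(N1)\le\mathbb E(N2)$, with equality iff the nonnegative integrand vanishes a.s., i.e. iff $\P(Q_{12}=0)=1$.

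The part I expect to be most delicate is the arc-length bookkeeping of the second step: the semicircles $E_1,E_2$ and the quadrant arcs wrap around $\phi=0$, so each intersection's endpoints must be tracked carefully to land on the clean formulas above. A secondary point is the hypothesis $\P(g_1=g_2=0)<1$: I would note that radial symmetry makes $\phi$ well defined off the origin, and that any atom at the origin lies in the case-1 conditioning set $E_1\cap E_2$ but not in the case-2 set $F_1\cap F_2$, so it only enlarges the case-1 denominator and can only strengthen the inequality, leaving the equality characterization unchanged.
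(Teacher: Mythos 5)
Your geometric computations are correct, and the route is genuinely different from the paper's: where the paper enumerates the sign cases of $Q_{12}$ and manipulates the conditional probabilities $E_1,E_2,E_3$ directly, you pass to the angular variable of $g$, turn every event into an arc, and reduce the comparison to the one-variable inequality $(2\gamma-\pi)^2(2M-\gamma)\le 0$ after symmetrizing over $Q_{12}\mapsto -Q_{12}$. I checked the arc lengths $(\alpha)_+$, $(\tfrac{\pi}{2}-\beta)_+$, $(-\alpha)_+$, $(\beta-\tfrac{\pi}{2})_+$, the identity $P-M=\tfrac{\pi}{2}-\gamma$, and the algebraic collapse; all are right, and your sign dichotomy $\sign\,\alpha=\sign(\tfrac{\pi}{2}-\beta)=-\sign\, Q_{12}$ is exactly the mechanism the paper exploits ($N1$ can be nonzero only when $Q_{12}<0$, $N2$ only when $Q_{12}>0$).

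The genuine gap is the final step, ``obtain the statement after taking $\mathbb E_Q[\,\cdot\,]$.'' The quantity $\mathbb E(N1)$ is an expectation conditional on the case-1 event, whose probability $\P(E_1\cap E_2\mid Q)=(\pi-\gamma)/(2\pi)$ depends on $Q$; hence
$$
\mathbb E(N1)=\frac{\mathbb E_Q[P]}{\mathbb E_Q[\pi-\gamma]}\ \neq\ \mathbb E_Q\Big[\frac{P}{\pi-\gamma}\Big],
$$
i.e.\ the conditional expectation is a $\P(\text{case 1}\mid Q)$-weighted average of your per-$Q$ ratios, not an unweighted one. This is not cosmetic: with the correct weights the denominators cancel, flip-symmetry gives $\mathbb E_Q[\pi-\gamma]=\pi/2$ and $\mathbb E_Q[P]=\mathbb E_Q[M]$, and one obtains $\mathbb E(N1)=\mathbb E(N2)$ identically (e.g.\ $Q_{11}=Q_{22}=1$, $Q_{12}=\pm\rho$ each with probability $1/2$, $g$ uniform on the circle, yields $\mathbb E(N1)=\mathbb E(N2)=2\arctan(\rho)/\pi$). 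So the strict inequality cannot be recovered under the literal reading ``$\mathbb E(N_i)$ is the expectation of $N_i$ given that case $i$ occurs.'' The paper's own proof makes the same move in its first line, factoring $\P(Q_{12}<0)$ out of a probability conditioned on \cref{pre_con1}, which silently assumes the sign of $Q_{12}$ is independent of the case-1 event — it is not, precisely because $\P(\cref{pre_con1}\mid Q)=(\pi-\gamma)/(2\pi)$ changes under $Q_{12}\mapsto-Q_{12}$. Your unweighted average $\mathbb E_Q[\mathbb E(N_1\mid Q)]$ is therefore in the same spirit as the quantity the paper actually bounds, and for that quantity your argument is complete and cleaner than the paper's; but as written, neither proof establishes the claimed strict inequality for the conditional expectation itself, and you need to state explicitly which average you are comparing.
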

	
	\begin{proof}
	According to $Qx+g=s$, it follows that $x_1=(-Q_{22}g_1+Q_{12}g_2)/|Q|,x_2=(Q_{12}g_1-Q_{11}g_2)/|Q|$ when $I=\{1,2\}$, and $s_1=g_1,s_2=g_2$ when $I=\emptyset$. By $Q\succ 0$, we have the condition
	\begin{equation}\label{pre_con0}
		\left\{
		\begin{aligned}
		Q_{11}Q_{22}-Q_{12}^2>0\\
		Q_{11}>0,\, Q_{22}>0
		\end{aligned}
		\right.	
	\end{equation}
	\newline
	{\bf Case 1:}
	
	By the assumption on case 1, we have
		\begin{equation}\label{pre_con1}
		\left\{
		\begin{aligned}
		-Q_{22}g_1+Q_{12}g_2\leq 0\\
		Q_{12}g_1-Q_{11}g_2\leq 0
		\end{aligned}
		\right.	
		\end{equation}
	For the next iteration, $A^+=\{1,2\}$,
		$$\left\{
		\begin{aligned}
		s_1^+ & = g_1 \\
		s_2^+ & = g_2
		\end{aligned}
		\right.
		$$
		then we obtain that
			\begin{center}
		\setlength\tabcolsep{3pt}
		\begin{tabular}{cll}
		if $Q_{12}>0$,& $N1=0$, & \\
		if $Q_{12}<0$,& $N1=0$, & when $g_1\geq 0,g_2\geq 0$,\\
		&$N1=1$, & when $g_2Q_{12}/Q_{22}\leq g_1<0$ or $g_1Q_{12}/Q_{11}\leq g_2<0$,\\
		if $Q_{12}=0$, & $N1=0$. &
		\end{tabular}
		\end{center}
	To sum up, if $\P(Q_{12}=0) = 1$, the expectation $\mathbb E(N1)=0$. Otherwise, $\P(Q_{12}<0) >0$, then $\mathbb E(N1)$ can be calculated by
    $$
		\begin{aligned}
		\mathbb E(N1) &= \P(Q_{12}< 0) \P(g_2Q_{12}/Q_{22}\leq g_1<0\\
		&\quad \text{ or }g_1Q_{12}/Q_{11}\leq g_2<0|\cref{pre_con1},\cref{pre_con0},Q_{12}<0)\\
		&= \P(Q_{12}<0) [\P(g_2Q_{12}/Q_{22}\leq g_1<0|\cref{pre_con1},\cref{pre_con0},Q_{12}<0)\\
		&\quad +\P(g_1Q_{12}/Q_{11}\leq g_2<0|\cref{pre_con1},\cref{pre_con0},Q_{12}<0)]\\
		&= \P(Q_{12}<0)[\P(g_2Q_{12}/Q_{22}\leq g_1<0,\cref{pre_con1}|\cref{pre_con0},Q_{12}<0)\\
		&\quad +\P(g_1Q_{12}/Q_{11}\leq g_2<0,\cref{pre_con1}|\cref{pre_con0},Q_{12}<0)]/\P(\cref{pre_con1}|\cref{pre_con0},Q_{12}<0)\\
		&= \P(Q_{12}<0)[\P(g_2Q_{12}/Q_{22}\leq g_1<0|\cref{pre_con0},Q_{12}<0)\\
		&\quad +\P(g_1Q_{12}/Q_{11}\leq g_2<0|\cref{pre_con0},Q_{12}<0)]/\P(\cref{pre_con1}|\cref{pre_con0},Q_{12}<0)\\
		\end{aligned}
    $$
    Let
    $$
    \begin{aligned}
     E_1&=\P(g_2Q_{12}/Q_{22}\leq g_1<0|\cref{pre_con0},Q_{12}<0),\\
     E_2&=\P(g_1Q_{12}/Q_{11}\leq g_2<0|\cref{pre_con0},Q_{12}<0),\\
     E_3&=\P(g_1>0,g_2>0|\cref{pre_con0},Q_{12}<0),
    \end{aligned}
    $$
   then  we have
    $$
    \mathbb E(N1)=\P(Q_{12}<0)(E_1+E_2)/(E_1+E_2+E_3).
    $$
    Here, $E_1> 0,E_2>0,E_3>0$ when $\P(Q_{12}=0) < 1$, which can be derived from the assumption that the function value of the joint probability distribution function of $g_1$ and $g_2$ on any zero-centered circle is fixed and  $\P(g_1=g_2=0)<1$.
    \newline
	{\bf Case 2:}
	
	By the assumption on case 2, we have
		\begin{equation}\label{pre_con2}
		\left\{
		\begin{aligned}
		g_1< 0\\
		g_2< 0
		\end{aligned}
		\right.	
		\end{equation}
	For the next iteration, $I^+=\{1,2\}$,
		$$\left\{
		\begin{aligned}
		sign(x_1^+) & = sign(-Q_{22}g_1+Q_{12}g_2) \\
		sign(x_2^+) & = sign(Q_{12}g_1-Q_{11}g_2)
		\end{aligned}
		\right.
		$$
		then we obtain that
	    \begin{center}
		\setlength\tabcolsep{3pt}
		\begin{tabular}{cll}
		if $Q_{12}>0$,& $N2=0$, & when $g_2Q_{11}/Q_{12}<g_1<g_2Q_{12}/Q_{22}$,\\
		&$N2=1$, & when $g_2Q_{12}/Q_{22}\leq g_1<0$ or $g_1Q_{12}/Q_{11}\leq g_2<0$,\\
		if $Q_{12}<0$,& $N2=0$, & \\
		if $Q_{12}=0$, & $N2=0$. &
		\end{tabular}
		\end{center}
	To sum up, if $\P(Q_{12}=0) = 1$, the expectation $\mathbb E(N2)=0$. Otherwise, $\P(Q_{12}>0) >0$, then $\mathbb E(N2)$ can be calculated by
    $$
		\begin{aligned}
		\mathbb E(N2) &= \P(Q_{12}> 0) \P(g_2Q_{12}/Q_{22}\leq g_1<0\\
		&\quad \text{ or }g_1Q_{12}/Q_{11}\leq g_2<0|\cref{pre_con2},\cref{pre_con0},Q_{12}>0)\\
		&= \P(Q_{12}>0) [\P(g_2Q_{12}/Q_{22}\leq g_1<0|\cref{pre_con2},\cref{pre_con0},Q_{12}>0)\\
		&\quad +\P(g_1Q_{12}/Q_{11}\leq g_2<0|\cref{pre_con2},\cref{pre_con0},Q_{12}>0)]\\
		&= \P(Q_{12}>0)[\P(g_2Q_{12}/Q_{22}\leq g_1<0,\cref{pre_con2}|\cref{pre_con0},Q_{12}>0)\\
		&\quad +\P(g_1Q_{12}/Q_{11}\leq g_2<0,\cref{pre_con2}|\cref{pre_con0},Q_{12}>0)]/P(\cref{pre_con2}|\cref{pre_con0},Q_{12}>0)\\
		&= \P(Q_{12}>0)[\P(g_2Q_{12}/Q_{22}\leq g_1<0|\cref{pre_con0},Q_{12}>0)\\
		&\quad +\P(g_1Q_{12}/Q_{11}\leq g_2<0|\cref{pre_con0},Q_{12}>0)]/P(\cref{pre_con2}|\cref{pre_con0},Q_{12}>0)\\
		&=\P(Q_{12}>0)(E_1+E_2)/E_3
		\end{aligned}
    $$
    Here, the last equality is obtained by the symmetry of $g_1$, $g_2$ and $Q_{12}$.

    If $\P(Q_{12}>0)=\P(Q_{12}<0)=0$, we obtain $\mathbb E(N1)=\mathbb E(N2)=0$. Otherwise, it follows that
    $$
    \frac{\mathbb E(N1)}{\mathbb E(N2)}=\frac{E_3}{E_1+E_2+E_3}< 1.
    $$
    Therefore, the proposition is true.
	\end{proof}
	
	Although \cref{prop:1} is only a special two-dimensional example, it tells us that the probabilities for the index sets $Im$ and $Am$ should be set separately. The assumption on the distributions of $Q$ and $g$ is common, where normal distribution and uniform distribution are included, and the two cases are fair to $A$ and $I$. Since the ultimate goal of RAS is to eliminate all infeasible indexes, the probability of moving indexes out of $Im $ should be higher than that of moving indexes out of $Am $, which is inspired by \cref{prop:1}. Besides, moving indexes from $Im$ to $A$ can reduce the size of linear equations that to be solved in KKT system, which is time saving.
	
	In addition, the set to which an infeasible index belongs in the previous iteration also affects their probability settings. For example, in the current iteration we have two infeasible indexes, one of which is feasible in the last iteration and the other is infeasible.
	Intuitively, we tend to move the index which is infeasible in two consecutive iterations with higher probability, and move the other index with lower probability.
	
	Therefore, when setting the changing probability of all infeasible indexes in the current iteration, we can trace back to the set to which they belong in previous few iterations. Different probability update rules can be designed for different combinations of current and historical index sets. Here, in order to reduce the number of hyperparameters in RAS, we only consider the latest previous iteration and give each combination a fixed changing probability.
	
	We first name the combinations of current and historical index sets in \cref{tab:sets}. For current iteration, we only consider the infeasible sets $Im$ and $Am$, and for previous iteration, we consider all four sets $Ip$, $Im$, $Ap$ and $Am$.  Since the indexes in $Ip$ and $Ap$ are still in $I$ and $A$ respectively in the next iteration, there are two non-existent combinations in the table, which are represented by '-'. Thus, we have six categories in total, which are named as NImp0, NImf, NImc, NAmp0, NAmf and NAmc. We specify the changing probabilities corresponding to the above six categories as $p_1$, $p_2$, \ldots, $p_6$, respectively. Then the flow chart of RAS is displayed in \cref{fig:ras}, and the  pseudo code is listed in \cref{ras}.
	
	\begin{table}[htb]
		\centering
		\begin{tabular}{|c|c|c|c|c|c|}
			\hline
			\multicolumn{2}{|c|}{}       & \multicolumn{4}{c|}{previous sets} \\
			\cline{3-6}
			\multicolumn{2}{|c|}{}       & Ip & Im & Ap & Am \\
			\hline
			current & Im & NImp0 & NImf & - & NImc \\
			\cline{2-6}
			sets & Am & - & NAmc & NAmp0 & NAmf \\
			\hline
		\end{tabular}%
		\caption{Combinations of the sets in the current and previous iteration} \label{tab:sets}
	\end{table}
	
	\begin{figure}[htb]
		\centering
		% Requires \usepackage{graphicx}
		\includegraphics[scale=0.8]{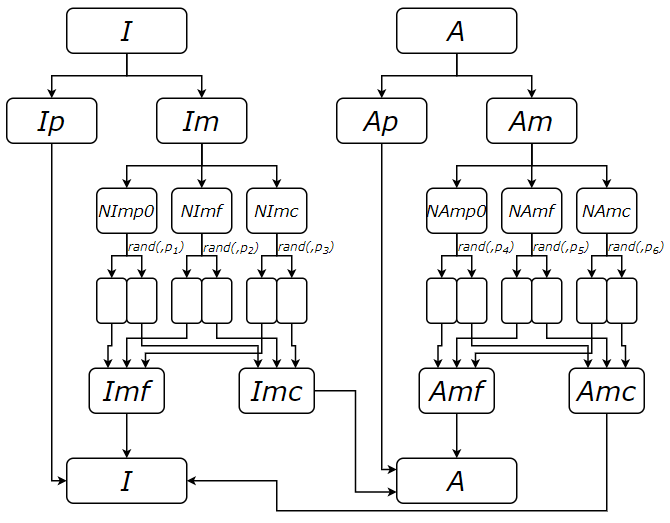}\\
		\caption{Flow chart of Random Active Set method}\label{fig:ras}
	\end{figure}
	
	\begin{algorithm}
		\caption{Random active set method}\label{ras}
		%	\begin{itemize}
		\begin{enumerate}
			\item[Step 0:] Give the initial guess on active set $A$ and $I=\{1,\ldots,n\}\setminus A$, probability $0<p_i<1$, $i=1,\ldots,6$, and the tolerance on dual nonnegativity violation $tol\geq 0$. Set $Ip0=Ap0=Imc=Amc=\emptyset$ and $Imf=Amf=\{1,\ldots,n\}$.
			\item[Step 1:] Solve the first two lines in \cref{kktas} and classify the infeasible indexes as follows \\
			$x=0$, $x_I = -Q_{I,I}^{-1}g_I$,\\
			$s=0$, $s_A = Q_{A,I}x_I + g_A$,\\
			$Im = \{i|x_i \leq 0\}$, $Ip = I \setminus Im$, $Am = \{i| s_i < -tol\}$, $Ap = A \setminus Am$,\\
			$NImp0= Im\cap Ip0$, $ NImf = Im\cap Imf$, $NImc = Im\cap Amc$,\\
			$NAmp0 = Am\cap Ap0$, $NAmf = Am\cap Amf$, $NAmc = Am\cap Imc$.
			\item[Step 2:] If $Im\cup Am=\emptyset$, stop. \\
			Otherwise, select a new active set randomly according to the following rules,\\
			$Imc = rand(NImp0,p_1)\cup rand(NImf,p_2)\cup rand(NImc,p_3)$,\\
			$Amc = rand(NAmp0,p_4)\cup rand(NAmf,p_5)\cup rand(NAmc,p_6)$,\\
			$Imf = Im\setminus Imc$, $ Amf = Am\setminus Amc$,\\
			$I = Ip\cup Imf\cup Amc$, $A = \{1,\ldots,n\}\setminus I$,
			\item[Step 3:] $Ip0=Ip$, $Ap0=Ap$, go to Step 1.
		\end{enumerate}
		%	\end{itemize}
	\end{algorithm}
	
	\cref{ras} is based on the framework of \cref{gras}, setting the changing probability in a special way. As long as $0<p_i<1$($i=1,\ldots,6$) is satisfied, the convergence result of this algorithm is the same as \cref{gras}, which can be guaranteed by \cref{thm}.
	
	It should be noted that in the implementation of RAS, if the changing sets are empty, i.e., the active set is the same for two consecutive iterations, the KKT equation does not need to be solved again, and this iteration is not counted in the iteration count. It just changes the category of some indexes and regenerates random numbers.
	
	In order to obtain the optimal value of the probability $p_i (i=1,\ldots,6)$, we design a new optimization problem, in which the variables are $0.01\leq p_i\leq 0.99(i=1,\ldots,6)$ and the objective function is the summation of the number of iterations of RAS running on several randomly generated problems. We regard this problem as a black box optimization problem and solve it by surrogate method \cite{regis2007stochastic,wang2014general}. Due to the randomness of the problem, we repeatedly solve it 50 times and take the mean value of the solution as the final values of $p_i(i=1,\ldots,6)$, which are
	$$p_1=0.5,p_2=0.98,p_3=0.98,p_4=0.01,p_5=0.93,p_6=0.94.$$
	These $p_i$ are also more in line with our intuitive understanding. For example, $p_1>p_4,p_2>p_5,p_3>p_6$, which is consistent with \cref{prop:1}.
	In addition, $p_1 < p_2$ and $p_4 < p_5$, which are also corresponding to the intuition that the probability of moving an index that is infeasible in two consecutive iterations should be greater than the probability of moving an index that is infeasible only in the current iteration. So far, we have completed all the details of RAS.
	
	\section{Numerical tests}
	\label{sec:4}
	In this section, we report some numerical results to demonstrate the performance of our proposed algorithm RAS. We compare our RAS algorithm with KR algorithm, MKR algorithm and interior point method on three benchmark sets. To comprehensively test the performance of the algorithms, the benchmark sets cover different dimensions, different sparsity, different types of spatial rotation and different condition numbers. All experiments were conducted on an Intel Core i7-7500U CPU with 2.70 GHz and 2.90 GHz and 16 GB RAM. The MATLAB code of RAS is available on http://github.com/guran1214/Random-Active-Set-Method.
	
	The ways to generate $Q$ affect the complexity of the algorithms. Based on the performance of KR and MKR in each benchmark, we divide the difficulty into three levels with three cases correspondingly. In the easy case, KR can converge in a few steps. In the medium case, KR needs more iterations to converge or fails to converge in some instances. In both cases above, $Q$ is a randomly generated sparse matrix, and the corresponding MATLAB codes for generating instances and the codes of KR and MKR algorithms are obtained from the link in \cite{hungerlander2015feasible}. In the hard case, we introduce a class of dense matrices, which makes the failure rate of KR higher and the number of iterations of MKR larger.
	
	In the following tables, `time' is the running time of the algorithm (in seconds); `solve' is the number of linear equations to be solved; `avgI' is the average size of linear equations to be solved; `fail' is the number of instances that KR can not solve within 200 iterations. Here, since KR may fall into a cycle, we add a limit of 200 iterations to it, which is consistent with the setting in \cite{hungerlander2015feasible}. There are multiple nesting in MKR, so we use the number of solving linear equations instead of the number of iterations.
	
	\subsection{Easy case}
	Firstly, we study the performance of RAS in the easy case. The problem generator is same as the first problem in Section 6.1 in \cite{hungerlander2015feasible}. We let $Q$ to be a sparse matrix, which is a random symmetric multi-diagonal matrix plus a small multiple of the identity matrix, making it  strictly positive definite. The specific MATLAB code is as follows
	$$p = sprandn(n, n, .1) + speye(n);p=tril(triu(p,-100));Q0=p*p';Q=Q0+\epsilon I;$$
	Consistent with \cite{hungerlander2015feasible}, we fix the dimension as $n = 2000$, the tolerance on dual nonnegativity violation of all algorithms as $tol=1e-8$, and test the influence of condition number on the algorithms by changing $\epsilon=1,1e-5,1e-10,1e-14$. In fact, we have learned from \cite{hungerlander2015feasible} that for this benchmark set, the influence of condition number is not significant, and both KR and MKR can converge within a few steps even with the highest condition number.
	
	In \cref{tab:t1}, we add RAS to the comparison. From the table, we can see that in the easy case, all the three algorithms perform well and take little time to solve the linear equations. In terms of the number of solving linear equations, KR wins the first place because it does not add any safeguard strategies and does not limit the changes of infeasible indexes, so it solves fewer linear equations than MKR and RAS. RAS has probabilistic restrictions on the change of infeasible indexes, but compared to MKR, it has less number of solving linear equations. In terms of computational time, RAS is close to KR, because the size of linear equations solved is smaller (see the last three lines in \cref{tab:t1}). KR and RAS are both faster than MKR.
	
	In \cref{fig:easy}, we can see the changes in the number of infeasible indexes in an instance of the benchmark. Since MKR is a nested algorithm, we only record on its outermost layer. The number of infeasible indexes in all algorithms decreases monotonically for this easy case.
	
	Therefore, in this benchmark set, RAS is similar to KR and MKR in that it is little affected by the condition number. In terms of numerical performance, the number of linear equations to be solved in RAS is larger than that of KR but smaller than that of MKR, and the total computational time of RAS is close to KR and less than MKR.
	
	\begin{figure}
		\centering
		% Requires \usepackage{graphicx}
		\includegraphics[scale=0.8]{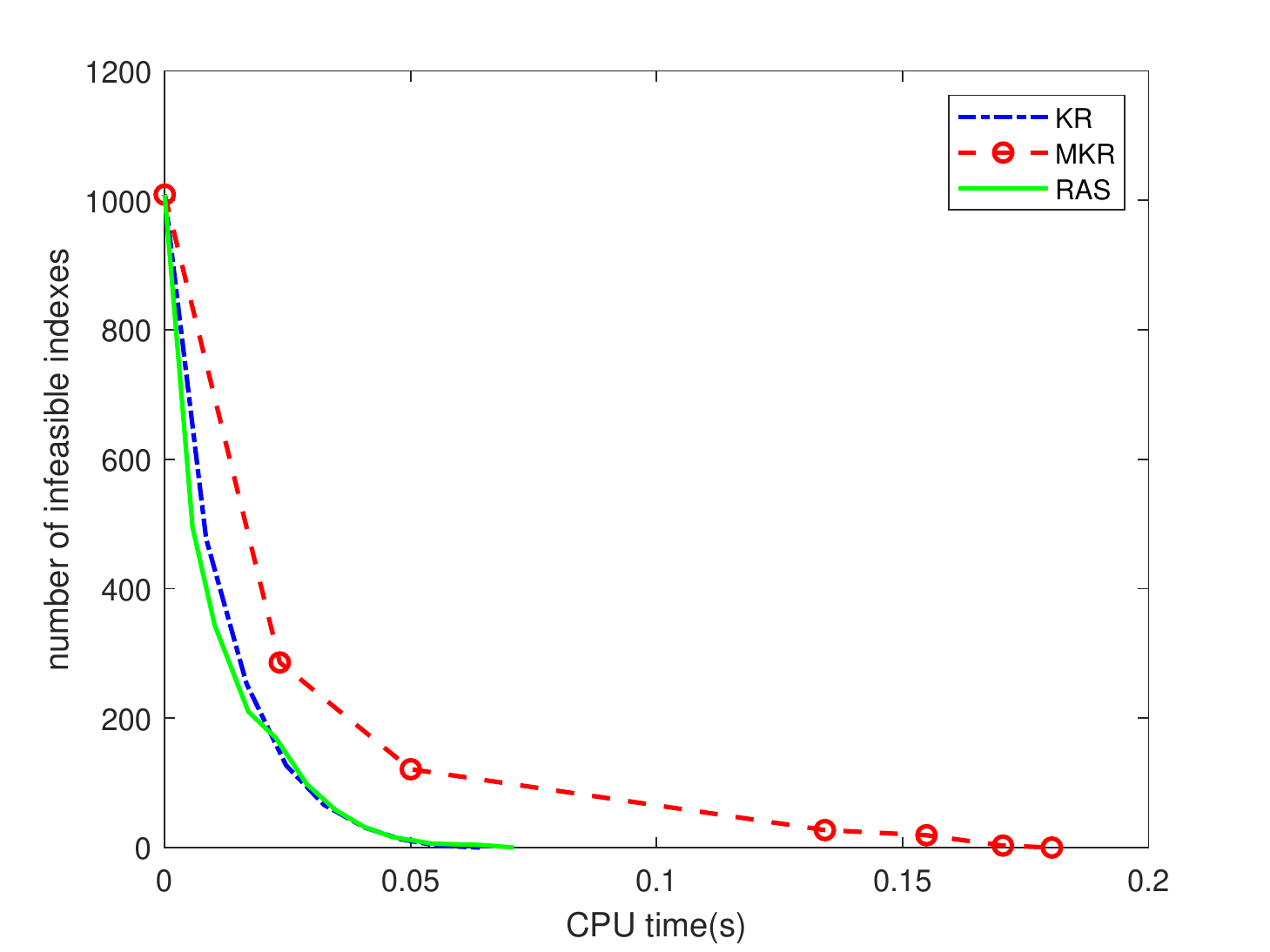}\\
		\caption{The number of infeasible indexes in KR, MKR and RAS changes with CPU time. (Easy case with $n = 2000$ and $\epsilon=1e-10$)}\label{fig:easy}
	\end{figure}
	
	\begin{table}
		\setlength\tabcolsep{3pt}
		\begin{tabular}{|r|r|r|r|r|r|r|r|r|r|r|r|}
			\hline
			cond & $\lg$($\epsilon$) & \multicolumn{4}{c}{kr} & \multicolumn{3}{|c}{mkr} & \multicolumn{3}{|c|}{ras} \\
			&       & time & solve & avgI & fail & time & solve & avgI & time & solve & avgI \\
			$\approx$1.00E+03 & 0 & 0.04 & 5.9 & 821.01 & 0 & 0.07 & 9.3 & 842.86 & 0.05 & 9.1 & 847.04 \\
			$\approx$1.00E+08 & -5 & 0.07 & 9.2 & 879.47 & 0 & 0.10 & 14.2 & 853.97 & 0.07 & 12.1 & 868.24 \\
			$\approx$1.00E+13 & -10 & 0.07 & 9.1 & 878.64 & 0 & 0.10 & 14.2 & 853.58 & 0.07 & 12.0 & 867.70 \\
			$\approx$1.00E+17 & -14 & 0.06 & 9.1 & 878.64 & 0 & 0.11 & 14.2 & 853.58 & 0.07 & 12.0 & 867.70 \\
			\hline
		\end{tabular}%
		\caption{Comparison of RAS, KR and MKR; $n = 2000$ and initial active set is $\{1,\ldots,n\}$. Each line is the average over 10 trials.} \label{tab:t1}
	\end{table}

	\subsection{Medium case}
	
	Next, we consider the random problem generated by the style given in \cite{curtis2015globally} which is also the second type problem in Section 6.1 in \cite{hungerlander2015feasible}. Specifically, we generate $ Q $ through the sprandsym routine of MATLAB, where $ Q $ is a sparse matrix with different options for its sparsity, dimension and condition number. We present these results in \cref{tab:t2}. Here, the built-in routine "quadprog" of MATLAB 2020a is also included in the comparison, in which the interior-point-convex method and the default termination criterion are adopted. For quadprog, we use
	$$error=\frac{\text{objective function value} - \text{optimal function value}}{|\text{optimal function value}|} $$
	to express the inaccuracy of its solution. Since the algorithm has a predictor-corrector, the linear equations is solved twice in each iteration. Besides, there is a 1e-10 tolerance on dual nonnegativity violation for KR, MKR and RAS.
	
	These examples are more difficult than the previous ones in the last subsection. Once the condition number becomes larger, the number of iterations of KR also becomes larger, and KR fails on a subset of the instances. Thus the average in KR given in \cref{tab:t2} considers only the instances that are successfully solved. In general, MKR and RAS in this benchmark set are faster than the interior point method in quadprog, which is consistent with our prediction, because interior point method requires solving a full-size linear equations, while active set method only needs to solve a small one.
	
	Next, we compare the performance of RAS and MKR in \cref{tab:t2}. The tests show that the number of solving linear equations in both MKR and RAS increases with the increase of the $ Q $'s condition number. In addition, we find that the number of solving linear equations also increases with the increase of $Q$'s density, except for a few cases in RAS. Compared with MKR, RAS changes less with different condition numbers and different densities. We can see that when $n = 5000$ and $den = 0.1$, the number of solving equations in MKR more than 400, while in RAS it is only 42. Besides, the computational time of RAS is a little more than one-seventh of that of MKR. In less extreme cases, the number of iterations and time consumed by RAS are usually about one-third of that of MKR, which fully demonstrates the superiority of RAS.
	
	In \cref{fig:medium}, we show the changes in the number of infeasible indexes for active set methods in an instance. KR does not converge. The number of infeasible indexes in RAS decreases non-monotonically. Although the number of infeasible indexes in MKR decreases monotonically in the outermost layer, its convergence rate is lower than that of RAS because the safeguard strategies are time-consuming.
	
	In MATLAB quadprog routine, there is another algorithm which is called trust-region-reflective algorithm.  Since it has been shown in \cite{hungerlander2015feasible} that MKR outperforms trust-region-reflective algorithm in the same experiment, we do not add this method for comparison here. To sum up, in this benchmark set, RAS fully shows its robustness to condition number and density, and is worthy of being the best of these tested algorithms.
	
	\begin{figure}
		\centering
		% Requires \usepackage{graphicx}
		\includegraphics[scale=0.8]{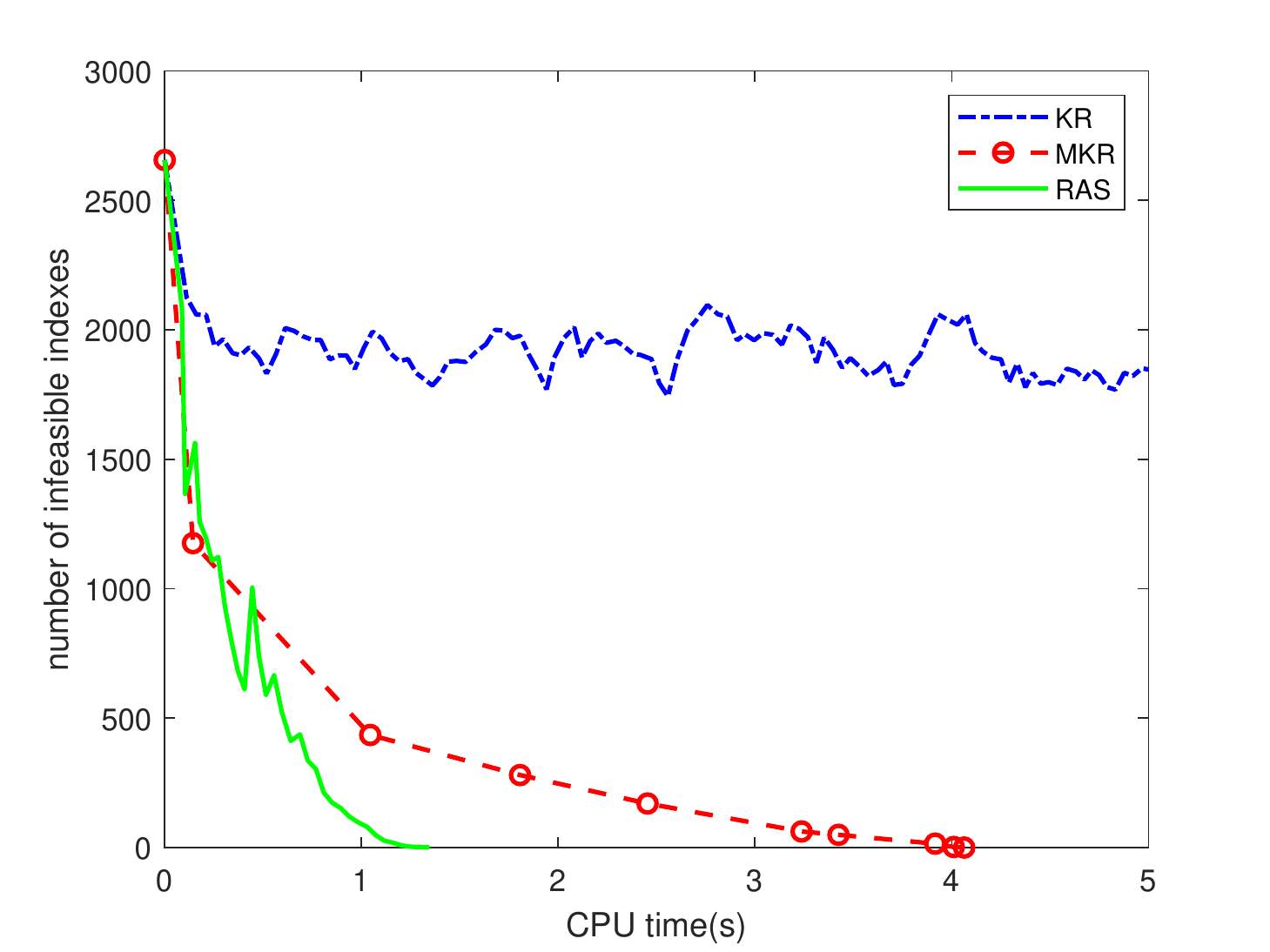}\\
		\caption{The number of infeasible indexes in KR, MKR and RAS changes with CPU time. (Medium case with $n = 5000$, $den=0.01$ and $\epsilon=1e-10$)}\label{fig:medium}
	\end{figure}
	
	\begin{sidewaystable}[!htb]
		\centering
		%\rule{0.1\linewidth}{1cm}
		%\resizebox{\linewidth}{!}{\rule{1.1\linewidth}{3cm}}
		\setlength\tabcolsep{3pt}
		\begin{tabular}{|r|r|r|r|r|r|r|r|r|r|r|r|r|r|r|r|}
			\hline
			n & dens & cond & \multicolumn{4}{c|}{kr} & \multicolumn{3}{c|}{mkr} & \multicolumn{3}{c|}{ras} & \multicolumn{3}{c|}{quadprog} \\
			&       &       & time & solve & avgI & fail & time & solve & avgI & time & solve & avgI & time & iter & error \\
			\hline
			1000 & 0.1 & 1.00E+02 & 0.041 & 6.3 & 450.122 & 0 & 0.064 & 8.8 & 456.889 & 0.049 & 8.6 & 457.676 & 0.901 & 8.8 & 9.28E-09 \\
			1000 & 0.1 & 1.00E+06 & 0.164 & 29.5 & 376.756 & 2 & 0.180 & 45.4 & 322.796 & 0.062 & 17.5 & 359.532 & 1.327 & 9.2 & 5.94E-08 \\
			1000 & 0.1 & 1.00E+10 & nan & nan & nan & 10 & 0.334 & 98.7 & 300.991 & 0.099 & 29.5 & 338.233 & 1.396 & 9.9 & 1.92E-06 \\
			1000 & 0.1 & 1.00E+14 & nan & nan & nan & 10 & 0.503 & 172.8 & 279.904 & 0.140 & 49.8 & 317.139 & 1.366 & 9.7 & 3.84E-06 \\
			\hline
			1000 & 0.01 & 1.00E+02 & 0.011 & 6.0 & 501.587 & 0 & 0.016 & 7.7 & 505.461 & 0.014 & 8.0 & 501.137 & 0.244 & 8.3 & 2.56E-07 \\
			1000 & 0.01 & 1.00E+06 & 0.028 & 16.4 & 461.397 & 3 & 0.034 & 21.1 & 436.079 & 0.020 & 14.1 & 436.711 & 0.283 & 9.6 & 2.39E-06 \\
			1000 & 0.01 & 1.00E+10 & 0.037 & 28.6 & 445.153 & 5 & 0.037 & 27.4 & 410.633 & 0.017 & 15.4 & 406.739 & 0.284 & 10.0 & 5.81E-06 \\
			1000 & 0.01 & 1.00E+14 & nan & nan & nan & 10 & 0.071 & 47.6 & 417.596 & 0.017 & 19.5 & 359.511 & 0.300 & 10.1 & 1.03E-05 \\
			\hline
			5000 & 0.1 & 1.00E+02 & 1.933 & 6.1 & 2247.106 & 0 & 3.412 & 10.5 & 2334.588 & 2.431 & 8.6 & 2307.621 & 72.167 & 9.2 & 1.93E-08 \\
			5000 & 0.1 & 1.00E+06 & 4.477 & 18.3 & 1856.003 & 0 & 9.973 & 66.3 & 1603.614 & 3.399 & 20.3 & 1839.807 & 145.957 & 10.0 & 3.45E-07 \\
			5000 & 0.1 & 1.00E+10 & 12.274 & 55.2 & 1642.681 & 0 & 18.894 & 182.2 & 1364.746 & 3.942 & 29.2 & 1670.412 & 204.534 & 10.9 & 7.63E-06 \\
			5000 & 0.1 & 1.00E+14 & nan & nan & nan & 10 & 35.128 & 413.8 & 1258.065 & 5.156 & 42.3 & 1569.568 & 221.607 & 10.8 & 1.82E-05 \\
			\hline
			5000 & 0.01 & 1.00E+02 & 0.966 & 6.3 & 2389.914 & 0 & 1.718 & 10.5 & 2488.564 & 1.462 & 9.3 & 2467.775 & 43.785 & 8.6 & 2.75E-07 \\
			5000 & 0.01 & 1.00E+06 & 7.190 & 96.1 & 2001.940 & 3 & 3.465 & 59.5 & 1771.884 & 1.285 & 19.1 & 1891.670 & 75.881 & 9.5 & 7.61E-06 \\
			5000 & 0.01 & 1.00E+10 & nan & nan & nan & 10 & 5.874 & 124.0 & 1636.032 & 2.064 & 36.9 & 1789.822 & 101.886 & 11.1 & 1.88E-05 \\
			5000 & 0.01 & 1.00E+14 & nan & nan & nan & 10 & 9.001 & 222.1 & 1548.105 & 2.790 & 61.3 & 1676.945 & 109.446 & 11.0 & 6.09E-05 \\
			\hline
			5000 & 0.001 & 1.00E+02 & 0.038 & 5.8 & 2724.865 & 0 & 0.061 & 7.9 & 2639.070 & 0.060 & 8.9 & 2637.268 & 2.514 & 7.7 & 9.43E-06 \\
			5000 & 0.001 & 1.00E+06 & 0.057 & 10.8 & 2624.934 & 6 & 0.108 & 17.9 & 2428.910 & 0.063 & 12.5 & 2383.115 & 2.945 & 9.7 & 1.54E-05 \\
			5000 & 0.001 & 1.00E+10 & nan & nan & nan & 10 & 0.119 & 22.2 & 2388.820 & 0.063 & 14.1 & 2234.562 & 3.295 & 11.2 & 3.96E-05 \\
			5000 & 0.001 & 1.00E+14 & nan & nan & nan & 10 & 0.153 & 27.6 & 2360.347 & 0.054 & 17.5 & 1765.256 & 3.297 & 10.5 & 3.63E-05 \\
			\hline
			10000 & 0.001 & 1.00E+02 & 0.594 & 6.2 & 5190.824 & 0 & 1.014 & 9.3 & 5366.164 & 0.883 & 9.2 & 5309.676 & 47.977 & 8.0 & 7.88E-06 \\
			10000 & 0.001 & 1.00E+06 & nan & nan & nan & 10 & 1.603 & 35.4 & 4569.452 & 0.760 & 17.3 & 4519.208 & 90.036 & 10.1 & 3.25E-05 \\
			10000 & 0.001 & 1.00E+10 & nan & nan & nan & 10 & 2.244 & 62.6 & 4305.715 & 0.837 & 23.8 & 4276.534 & 116.439 & 12.0 & 3.41E-05 \\
			10000 & 0.001 & 1.00E+14 & nan & nan & nan & 10 & 2.550 & 71.1 & 4229.668 & 0.898 & 30.8 & 3678.914 & 133.709 & 11.6 & 1.06E-04 \\
			\hline
		\end{tabular}
		\caption{Random problems generated by sprandsym. Each line is the average over 10 trials.} \label{tab:t2}
	\end{sidewaystable}

	\subsection{Hard case}
	
	As we found that the density of $ Q $ greatly affects the performance of active set methods, here in this subsection, we test the performance of each algorithm when $  Q $ is a dense matrix. The combination of large condition number and high density will bring great difficulties to the algorithms, which is called hard case. We generate the random problem through the following MATLAB code:
	$$
	g = rand(n,1)-0.5;\, O=randn(n);\,O=qr(O);
	$$
	$$
	D=diag(cond.\hat{ }((0:n-1)/(n-1)));\, Q=O*D*O';
	$$
	where $ O $ is a random orthogonal matrix. Here the eigenvalue of $ Q $ is proportional and the condition number is noted as cond. The tolerance on dual nonnegativity violation of the three active set algorithms is set to $tol=1e-10$.
	
	The comparison of the algorithms in this benchmark set is shown in \cref{tab:t3}. The three active set algorithms are all sensitive to the condition number and  insensitive to the dimension, which is consistent with the test in the previous subsection. Since here we use dense $ Q $ instead, the number of solving linear equations for active set methods should be large according to the observations in the previous subsection. In fact, that is true because there are many examples that KR cannot solve, and the number of solving linear equations in MKR has reached more than 1000, but interestingly, the number of solving linear equations in RAS can still be stable within 50. This leads to the fact that in some cases with large condition number, the amount of computation of RAS is much smaller than that of MKR. For example, when $n = 2000$, $cond = 1e+14$,  the number of solving linear equations in MKR is 28 times that in RAS, and the computational time is more than 15 times that of RAS. Undoubtedly, RAS is the best of the three active set algorithms for the hard case.
	
	The  interior point method in quadprog routine is also involved in this comparison. We still set its termination criteria as the default. In some cases, it terminates in advance when the accuracy is not enough, which can be seen from the error column in \cref{tab:t3}. So the corresponding computational time is much less than that when the termination criteria for accuracy are met. Even with such a large error, in almost all cases,  the computational time of quadprog is still longer than that of RAS. This also helps to reflect the extraordinary performance of RAS.

	\begin{sidewaystable}[!htb]
		\centering
		\setlength\tabcolsep{3pt}
		\begin{tabular}{|r|r|r|r|r|r|r|r|r|r|r|r|r|r|r|}
			\hline
			n & cond & \multicolumn{4}{c|}{kr} & \multicolumn{3}{c|}{mkr} & \multicolumn{3}{c|}{ras} & \multicolumn{3}{c|}{quadprog} \\
			&       & time & solve & avgI & fail & time & solve & avgI & time & solve & avgI & time & iter & error \\
			\hline
			500 & 1.00E+06 & 0.021 & 13.6 & 231.275 & 0 & 0.070 & 46.6 & 199.865 & 0.017 & 16.9 & 220.754 & 0.044 & 10.9 & 1.04E-06 \\
			500 & 1.00E+10 & 0.183 & 110.7 & 244.689 & 7 & 0.228 & 211.3 & 179.136 & 0.028 & 26.1 & 225.733 & 0.044 & 11.4 & 2.38E-03 \\
			500 & 1.00E+14 & nan & nan & nan & 10 & 0.837 & 972.3 & 155.839 & 0.057 & 47.9 & 228.239 & 0.048 & 11.4 & 8.19E-01 \\
			\hline
			1000 & 1.00E+06 & 0.122 & 13.5 & 460.741 & 0 & 0.369 & 56.4 & 391.470 & 0.111 & 17.3 & 439.651 & 0.284 & 11.0 & 1.45E-06 \\
			1000 & 1.00E+10 & 1.789 & 195.0 & 496.405 & 9 & 1.104 & 237.8 & 356.478 & 0.150 & 26.1 & 447.151 & 0.302 & 12.1 & 2.05E-03 \\
			1000 & 1.00E+14 & nan & nan & nan & 10 & 3.818 & 1105.8 & 313.282 & 0.268 & 44.7 & 455.374 & 0.312 & 12.0 & 4.95E-01 \\
			\hline
			2000 & 1.00E+06 & 0.559 & 14.5 & 931.086 & 0 & 1.652 & 61.8 & 797.758 & 0.457 & 18.2 & 887.930 & 1.594 & 11.1 & 2.23E-06 \\
			2000 & 1.00E+10 & nan & nan & nan & 10 & 4.970 & 257.3 & 716.554 & 0.690 & 28.2 & 901.357 & 1.735 & 12.3 & 2.63E-03 \\
			2000 & 1.00E+14 & nan & nan & nan & 10 & 18.066 & 1264.4 & 632.899 & 1.155 & 45.2 & 921.583 & 1.815 & 12.5 & 4.46E-01 \\
			\hline
			4000 & 1.00E+06 & 2.443 & 14.4 & 1836.387 & 0 & 7.787 & 64.1 & 1576.270 & 2.415 & 19.0 & 1764.082 & 11.792 & 11.7 & 3.16E-06 \\
			4000 & 1.00E+10 & nan & nan & nan & 10 & 24.998 & 282.4 & 1426.014 & 3.584 & 28.4 & 1801.039 & 12.008 & 13.1 & 1.76E-03 \\
			4000 & 1.00E+14 & nan & nan & nan & 10 & 92.300 & 1377.6 & 1250.603 & 6.089 & 45.3 & 1814.294 & 12.822 & 13.1 & 3.44E-01 \\
			\hline
		\end{tabular}%
		\caption{Random problems with dense Q. Each line is the average over 10 trials.} \label{tab:t3}
	\end{sidewaystable}

	\section{Conclusions}
	\label{sec:5}
	
	In this paper, we considered the problem of SCQP with simple bound constraints. We proposed a random active set method, called RAS for short. This method has the characteristics of rapid adjustment of active set without getting stuck in a cycle. In the design of RAS, the classification of infeasible indexes helps to improve the robustness and efficiency of the algorithm. The finite step convergence was also proved in the sense of probability. In numerical experiments, we found that RAS is sensitive to the condition number and density of the matrix $ Q $, which is the same as other methods, but it is least affected. Numerical experiments also fully show the great efficiency and robustness of RAS.
	
	This is the first method that combines randomness and active set method. The idea is simple but powerful. It is not only applicable to quadratic programming, but also can be extended to other problems.
	
	Meanwhile, the method we used to prove the finite convergence may not be the best, so we only get the basic convergence result. If better mathematical tools are used, better results may be obtained, including more theoretical results for the design of those adjustable probabilities.
	
	%\appendix
	%\section{An example appendix}
	%\lipsum[71]
	%
	%\begin{lemma}
	%Test Lemma.
	%\end{lemma}

	\section*{Acknowledgments}
	This manuscript is completed at Nankai University, with the support from School of Statistics and Data Science, School of Mathematics Science, and NSFC grant 12001297.

	\bibliographystyle{siamplain}
	\bibliography{references}
\end{document}